\numberwithin{equation}{section}
\newtheorem{thm}[equation]{Theorem}
\newtheorem{pro}[equation]{Proposition}
\newtheorem{cor}[equation]{Corollary}
\theoremstyle{definition}
\newtheorem{defn}[equation]{Definition}
\newtheorem{rem}[equation]{Remark}
\def\alp{\alpha}
\def \bC{\mathbb C}
\def\bR{\mathbb R}
\def\bZ{\mathbb Z}
\def\zz{\mathbb Z}
\def\bC{\mathbb  C}
\newcommand{\bb}[1]{\mbox{$\mathbb{#1}$}}
\def\op{\operatorname}
\author{Carmen Rovi}
\address{Department of Mathematics, Indiana University Bloomington, Rawles Hall, 831 E 3rd St, Bloomington, IN 47405}
\email{crovi@indiana.edu}
\author{Shoji Yokura$^*$}
\address{Department of Mathematics and Computer Science,
Graduate School of Science and Engineering,\\ Kagoshima University,
21-35 Korimoto 1-Chome,
Kagoshima 890-0065, Japan}
\email{yokura@sci.kagoshima-u.ac.jp}
\date{}
\thanks{(*) Partially supported by JSPS KAKENHI Grant Number 16H03936}
\title[Hirzebruch $\chi_y$-genera mod $8$ of fiber bundles]
{Hirzebruch $\chi_y$-genera modulo $8$ of fiber bundles\\
for odd integers $y$}
\begin{document}

\maketitle

\begin{abstract} I. Hambleton, A. Korzeniewski and A. Ranicki have proved that the signature of a fibre bundle $F \hookrightarrow E \to B$ of closed, connected, compatibly oriented PL manifolds is always multiplicative mod 4, i.e. $\sigma(E)\equiv \sigma(F)\sigma(B) \, \op{mod} \, 4.$ In this paper we consider the Hirzebruch $\chi_y$-genera for odd integers $y$ for a smooth fiber bundle $F \hookrightarrow E \to B$ such that $E, F$ and $B$ are compact complex algebraic manifolds (in the complex analytic topology, not in the Zariski topology). In particular, if $y=1$, then $\chi_1$ is the signature $\sigma$. We show that the Hirzebruch $\chi_y$-genera of such a fiber bundle are always multiplicative mod $4$, i.e. $\chi_y(E) \equiv \chi_y(F) \chi_y(B) \, \op{mod} \, 4$. We also investigate multiplicativity mod $8$, and show that if $y \equiv 3 \,\, \op{mod}\,\, 4$, then $\chi_y(E) \equiv \chi_y(F) \chi_y(B) \, \op{mod} \, 8$ and that in the case when $y \equiv 1 \,\, \op{mod} 4$ the Hirzebruch $\chi_y$-genera of such a fiber bundle is multiplicative mod $8$ if and only if the signature is multiplicative mod $8$, and that the non-multiplicativity modulo $8$ is identified with an Arf-Kervaire invariant.
\end{abstract}


\section{Introduction}
The Hirzebruch $\chi_y$-genus $\chi_y(X)$ of a compact complex algebraic manifold $X$ was introduced by F. Hirzebruch \cite{Hir} (also see \cite{HBJ}) in order to extend his famous Hirzebruch-Riemann-Roch theorem to the parametrized setting. If $y=-1, 0, 1$, then these $\chi_y$-genera are respectively
\begin{itemize}
\item $\chi_{-1}(X) =\chi(X)$ the \emph{Euler-Poincar\'e characteristic},
\item $\chi_0(X) = \tau(X)$ the \emph{Todd genus},
\item $\chi_1(X)=\sigma(X)$ the \emph{signature},
\end{itemize}
 which are very important invariants in geometry and topology, and even in mathematical physics. 
 
The Euler-Poincar\'e characteristic is multiplicative for \emph{any} topological fiber bundle $F \hookrightarrow E \to B$, i.e. $\chi(E) = \chi(F)\chi(B)$ holds. The signature is in general not multiplicative for fibre bundles. S. S. Chern, F. Hirzebruch and J.-P. Serre \cite{CHS} proved that the signature is multiplicative for a fiber bundle under a certain monodromy condition, i.e. if the fundamental group $\pi_1(B)$ of the base space $B$ acts trivially on the cohomology group $H^*(F; \mathbb R)$ of the fiber space $F$.  Later  M. Atiyah \cite{At}, F. Hirzebruch \cite {Hir0} and K. Kodaira \cite{Ko} gave the first examples of fibre bundles with non-multiplicative signatures.

I. Hambleton, A. Korzeniewski and A. Ranicki  \cite{HKR}  showed that for a $PL$ fibre bundle $F \hookrightarrow E \to B$ of closed, connected, compatibly oriented $PL$ manifolds
$$\sigma(E) \equiv \sigma(F) \sigma(B) \op{mod} 4.$$
In \cite{Rov, Rov2} C. Rovi has shown that for a fiber bundle $F \hookrightarrow E \to B$ in the manifold context, if the action of $\pi_1(B)$ on $H^m(F, \zz)/ torsion \otimes \mathbb Z_4$ is trivial (where $\op{dim}_{\mathbb R} F = 2m$),
$$\sigma(E) \equiv \sigma(F) \sigma(B) \op{mod} 8.$$
In \cite{Rov, Rov2} C. Rovi has also shown that the non-multiplicativity of the signature modulo $8$ of a fibre bundle is detected by the 
$\zz_2$-valued Arf-Kervaire invariant of a certain quadratic form associated to the fiber bundle as above.

In \cite{Yo} S. Yokura has studied some explicit formulae of the Hirzebruch $\chi_y$-genera for complex fiber bundles $F \hookrightarrow E \to B$ where $F, E, B$ are compact complex algebraic manifolds, and 
he has observed that $\sigma(E) \equiv \sigma(F) \sigma(B) \op{mod} 4$ just like the above result of Hambleton-Korzeniewski-Ranicki.

In this paper we consider such mod $4$ and mod $8$ multiplicativity formulae of Hirzebruch $\chi_y$-genera for a fibre bundle $F \hookrightarrow E \to B$ of compact complex algebraic manifolds. A smooth proper map between compact complex algebraic manifolds is a locally trivial topological fibration (by Ehresmann's fibration theorem), thus becomes such a fiber bundle, and conceivably such fiber bundles arise in this way. 

The main result 
of this paper is stated in Theorem \ref{mod8}, which gives an overview of the non-multiplicative behaviour of $\chi_y$ genera modulo $8$ of a fiber bundle 
for odd values of $y$. It is interesting to note that when $y \equiv 1 \mod{4}$ 
the $\chi_y$-genera adopt the same non-multiplicativity behaviour as the signature, and therefore the obstruction for multiplicativity in this case (when $y \equiv 1 \mod{4}$) is detected by the Arf-Kervaire invariant of a certain quadratic form associated to the fiber bundle, which is described in Theorem \ref{mod8} below.

\begin{thm}\label{mod8} Let $F \hookrightarrow E \to B$ be a fiber bundle such that $F, E, B$ are compact complex algebraic manifolds. 

\begin{itemize}
\item[(a)] If $y \equiv 3 \, \op{mod} \, 4$, then $\chi_y(E) \equiv \chi_y(F)\chi_y(B) \, \op{mod} \, 8.$
\item[(b)] If $y \equiv 1 \, \op{mod} \, 4$, then $\chi_y(E) \equiv \chi_y(F)\chi_y(B) \, \op{mod} \, 8 \Longleftrightarrow \, \sigma(E) \equiv \sigma(F)\sigma(B) \, \op{mod} \, 8.$ Moreover
$$\chi_y(E) - \chi_y(F) \chi_y(B) \equiv 4 \textnormal{Arf}(W, \mu, h)  \pmod{8}.$$
where $(W, \mu, h)$ is a certain $\zz_2$-valued quadratic form associated to the fiber bundle. (For details see Theorem \ref{Arf-chiy} below.)
\end{itemize}
\end{thm}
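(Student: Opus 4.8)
The plan is to reduce the whole statement to the multiplicative behaviour of the \emph{signature}, exploiting the elementary congruence $y^2\equiv 1\pmod 8$ that holds for every odd integer $y$. For a compact complex algebraic manifold $X$ write $\chi_y(X)=\sum_p\chi^p(X)\,y^p$ with $\chi^p(X)=\sum_q(-1)^q h^{p,q}(X)\in\zz$; the specializations recalled in the introduction give $\sum_p\chi^p(X)=\chi_1(X)=\sigma(X)$ and $\sum_p(-1)^p\chi^p(X)=\chi_{-1}(X)=\chi(X)$. Since $y$ is odd, $y^p\equiv 1\pmod 8$ for even $p$ and $y^p\equiv y\pmod 8$ for odd $p$. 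Splitting $\chi_y(X)$ according to the parity of $p$ and setting $u=\tfrac{1+y}{2}$ and $v=\tfrac{1-y}{2}$ (integers with $u+v=1$ and $uv=\tfrac{1-y^2}{4}$), I obtain the structural congruence
\begin{equation}\label{eq:chiy-struct}
\chi_y(X)\ \equiv\ u\,\sigma(X)+v\,\chi(X)\pmod 8 .
\end{equation}
This is the only step using the Hodge-theoretic nature of $X$; the rest is formal arithmetic.

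First I would substitute \eqref{eq:chiy-struct}, applied to $E$, $F$ and $B$, into the defect $\chi_y(E)-\chi_y(F)\chi_y(B)$, feeding in the two multiplicativity facts already available: the Euler characteristic is \emph{exactly} multiplicative, $\chi(E)=\chi(F)\chi(B)$, while the signature defect $\Delta_\sigma:=\sigma(E)-\sigma(F)\sigma(B)$ satisfies $\Delta_\sigma\equiv 0\pmod 4$ by \cite{HKR}. Collecting terms, all the Euler-characteristic and cross contributions assemble into the single term $uv\,(\sigma(F)-\chi(F))(\sigma(B)-\chi(B))$. The decisive arithmetic observation is that this term is $\equiv 0\pmod 8$: because $y$ is odd, $uv=\tfrac{1-y^2}{4}$ is even, and $\sigma(X)-\chi(X)=2\sum_{p\ \mathrm{odd}}\chi^p(X)$ is even for every $X$, so the product is divisible by $8$. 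What remains is the clean identity
\begin{equation}\label{eq:chiy-defect}
\chi_y(E)-\chi_y(F)\chi_y(B)\ \equiv\ u\,\bigl(\sigma(E)-\sigma(F)\sigma(B)\bigr)\pmod 8 .
\end{equation}

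With \eqref{eq:chiy-defect} the theorem becomes a short case analysis on $y\bmod 4$. If $y\equiv 3\pmod 4$ then $u=\tfrac{1+y}{2}$ is even; multiplying this even number by $\Delta_\sigma\equiv 0\pmod 4$ yields a multiple of $8$, so $\chi_y(E)\equiv\chi_y(F)\chi_y(B)\pmod 8$, which is part (a). If $y\equiv 1\pmod 4$ then $u$ is odd, hence a unit modulo $8$, and \eqref{eq:chiy-defect} at once gives the equivalence $\chi_y(E)\equiv\chi_y(F)\chi_y(B)\pmod 8\iff\sigma(E)\equiv\sigma(F)\sigma(B)\pmod 8$. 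For the final refinement I would invoke Rovi's identification \cite{Rov,Rov2} of the mod $8$ signature defect with an Arf--Kervaire invariant, $\sigma(E)-\sigma(F)\sigma(B)\equiv 4\,\op{Arf}(W,\mu,h)\pmod 8$; since $u$ is odd one has $4u\equiv 4\pmod 8$, and \eqref{eq:chiy-defect} converts this into $\chi_y(E)-\chi_y(F)\chi_y(B)\equiv 4\,\op{Arf}(W,\mu,h)\pmod 8$ with the \emph{same} quadratic form $(W,\mu,h)$.

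I expect the genuine difficulty to lie not in the congruences above---which are essentially formal once \eqref{eq:chiy-struct} is recorded---but in the construction of the quadratic form $(W,\mu,h)$ and the verification that its Arf invariant computes the mod $8$ signature defect, i.e.\ in the material imported from \cite{Rov,Rov2} and made explicit for the present situation in Theorem \ref{Arf-chiy}. The attractive feature of this reduction is that, precisely because the factor $u$ is an odd unit when $y\equiv 1\pmod 4$, no new invariant has to be manufactured for $\chi_y$: the signature's quadratic form transfers verbatim.
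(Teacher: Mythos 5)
Your proposal is correct and follows essentially the same route as the paper: your structural congruence $\chi_y(X)\equiv\frac{\sigma(X)}{2}(1+y)+\frac{\chi(X)}{2}(1-y)\pmod 8$ is exactly the paper's Proposition 3.5 (stated there as a congruence modulo the polynomial $1-y^2$ and then specialized at odd $y$ using $1-y^2\equiv 0\pmod 8$), and the ensuing case analysis on $y\bmod 4$ together with the appeal to Rovi's identification of the mod $8$ signature defect with the Arf invariant is the same as in Theorems 4.1 and 4.6 of the paper. The only cosmetic difference is that you must verify by hand that the cross term $uv\,(\sigma(F)-\chi(F))(\sigma(B)-\chi(B))$ is divisible by $8$, whereas in the paper's polynomial formulation that term vanishes identically modulo $1-y^2$.
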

The proof of Theorem \ref{mod8} will be subdivided into shorter statements. In particular,
(a) and the first part of (b)  
is proved in Theorem \ref{mod8-part1}. 
The second part of (b), i.e. the identification of the obstruction to multiplicativity with the Arf-Kervaire invariant 
is shown in Theorem \ref{Arf-chiy}.

There is another result which  follows from \cite[Theorem 3.5]{Rov} and Theorem 1.1 
:
\begin{thm}\label{cor1} Let $F \hookrightarrow E \to B$ be a fiber bundle such that $F, E, B$ are compact complex algebraic manifolds  with $\textnormal{dim}_{\bR} F =2m$. If the action of $\pi_1(B)$ on $H^m(F, \bZ)/torsion \otimes \bZ_4$ is trivial, then for \emph{any} odd integer $y$
$$\chi_y(E) \equiv \chi_y(F)\chi_y(B) \, \op{mod} \, 8.$$
\end{thm}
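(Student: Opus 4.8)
The plan is to deduce Theorem \ref{cor1} directly by combining the mod~$8$ signature result of Rovi with the equivalence established in part~(b) of Theorem \ref{mod8}. The key observation is that Theorem \ref{cor1} splits naturally according to the residue of $y$ modulo $4$, and in each case the conclusion follows from results already available. For $y \equiv 3 \pmod 4$, there is in fact nothing to prove beyond invoking part~(a) of Theorem \ref{mod8}: the congruence $\chi_y(E) \equiv \chi_y(F)\chi_y(B) \pmod 8$ holds for \emph{every} such fiber bundle of compact complex algebraic manifolds, with no hypothesis on the monodromy. So the monodromy assumption on the action of $\pi_1(B)$ on $H^m(F,\bZ)/\text{torsion} \otimes \bZ_4$ is only needed to handle the case $y \equiv 1 \pmod 4$.

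For the case $y \equiv 1 \pmod 4$, the strategy is to transfer the hypothesis into the signature statement. First I would observe that \cite[Theorem 3.5]{Rov} asserts precisely that, under the stated triviality of the $\pi_1(B)$-action on $H^m(F,\bZ)/\text{torsion} \otimes \bZ_4$, the signature is multiplicative mod~$8$, i.e.\ $\sigma(E) \equiv \sigma(F)\sigma(B) \pmod 8$. Then part~(b) of Theorem \ref{mod8} gives the logical equivalence
$$\chi_y(E) \equiv \chi_y(F)\chi_y(B) \, \op{mod} \, 8 \Longleftrightarrow \sigma(E) \equiv \sigma(F)\sigma(B) \, \op{mod} \, 8,$$
valid for any bundle with $y \equiv 1 \pmod 4$. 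Chaining these two facts, the mod~$8$ multiplicativity of the signature supplied by Rovi's theorem forces the mod~$8$ multiplicativity of $\chi_y$. This completes the case $y \equiv 1 \pmod 4$.

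Finally I would assemble the two cases: since every odd integer $y$ satisfies either $y \equiv 1$ or $y \equiv 3 \pmod 4$, and the desired congruence holds in both residue classes (unconditionally in one, and under the monodromy hypothesis via Rovi's theorem and the equivalence in the other), the conclusion $\chi_y(E) \equiv \chi_y(F)\chi_y(B) \pmod 8$ holds for any odd $y$, as claimed. The proof is essentially a bookkeeping argument; the only point demanding care is checking that the triviality hypothesis in the form stated in \cite[Theorem 3.5]{Rov} matches exactly the hypothesis of Theorem \ref{cor1}, so that Rovi's mod~$8$ signature multiplicativity can be invoked verbatim. This matching of hypotheses is the main (and only) obstacle, and it is more a matter of verifying conventions than of genuine mathematical difficulty, since the entire substance has already been absorbed into Theorem \ref{mod8}(b) and the cited result.
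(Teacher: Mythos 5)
Your proposal is correct and follows essentially the same route as the paper: the paper's proof also invokes Rovi's mod~$8$ signature multiplicativity under the monodromy hypothesis (citing \cite[Theorem 6.3.1]{Rov} and \cite[Theorem 3.5]{Rov2}) and combines it with the equivalence in Theorem \ref{mod8-part1}(2), the case $y \equiv 3 \pmod 4$ being covered unconditionally by Theorem \ref{mod8-part1}(1). Your explicit case split by residue of $y$ modulo $4$ is in fact slightly more careful than the paper's one-line argument, which leaves the $y \equiv 3 \pmod 4$ case implicit.
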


\begin{rem}
From Theorem \ref{cor1} we can immediately observe that \emph{if $B$ is simply connected}, such as smooth complex rational varieties and smooth Fano varieties, then for \emph{any} odd integer $y$
$$\chi_y(E) \equiv \chi_y(F)\chi_y(B) \, \op{mod} \, 8.$$
However, in this case the usual equality does hold:
$$\chi_y(E) = \chi_y(F)\chi_y(B) \,\in \zz.$$
This is because in this case the action of $\pi_1(B)$ is automatically trivial on $H^*(F,\mathbb Z)$, from which $\chi_y(E) = \chi_y(F)\chi_y(B)$ follows, as explained in \cite[Remark 4.6]{MS} (also see \cite{CMS1, CLMS1, CLMS2}).
\end{rem}

\section{Hirzebruch $\chi_y$-genera}
First we recall the  definition of the Hirzebruch $\chi_y$-genus \cite{Hir}. 

Let $X$ be a compact complex algebraic manifold. The
\emph{$\chi_{y}$-genus} of $X$ is defined by
$$\chi_{y}(X):=
\sum_{p\geq 0} \chi(X,\Lambda^{p}T^{*}X)y^p
= \sum_{p\geq 0} \left( \sum_{i\geq 0}
(-1)^{i}\op{dim}_{\mathbb C}H^{i}(X,\Lambda^{p}T^{*}X) \right)y^p\:.
$$

Thus the $\chi_y$-genus is the generating function of the Euler-Poincar\'e characteristic $\chi(X,\Lambda^{p}T^{*}X)$ of the sheaf $\Lambda^{p}T^{*}X$, which shall be simply denoted by $\chi^p(X)$:
$$\chi_{y}(X)= \sum_{p\geq 0} \chi^p(X)y^p.$$
Since $\Lambda^{p}T^{*}X =0$ for $p> \op{dim}_{\mathbb C}X$, $\chi_{y}(X)$ is a polynomial of degree at most $\op{dim}_{\mathbb C} X$. Note that for an ordinary product of spaces $\chi_y$ is multiplicative, i.e. $\chi_y(X \times Y) = \chi_y(X) \chi_y(Y)$.

Then we have the following ``generalized Hirzebruch-Riemann-Roch theorem" (abbr., gHRR):
$$
\chi_{y}(X)= \int_{X} T_{y}(TX)\cap [X]
\quad \in \bb{Q}[y],
$$
where $T_{y}(TX)$ is the generalized Todd class of the tangent bundle of $X$. $T_y(TX)$, which we will denote simply by $T_y(X)$, is defined as follows:
$$T_{y}(X):= \prod_{i=1}^{\op{dim} X} \left (\frac{\alpha_i(1+y)}{1-e^{-\alpha_i(1+y)}} -\alpha_i y \right ),
$$
where $\alpha_{i}$ are the Chern roots of the tangent bundle $TX$. That is, writing $c(X)$ as the total Chern class of $TX$,
$$c(X) = \prod_{i=1}^{\op{dim} X} (1 + \alp_i).$$
Note that the normalized
power series
\[Q_{y}(\alpha):= \frac{\alpha(1+y)}{1-e^{-\alpha(1+y)}} -\alpha y
\quad \in \bb{Q}[y][[\alpha]] \:. \]
specializes to
$$
Q_{-1}(\alp) = \:\displaystyle  1+\alpha,\quad
Q_0(\alp) = \displaystyle  \frac{\alpha}{1-e^{-\alpha}},\quad
Q_1(\alp) = \displaystyle   \frac{\alpha}{\tanh \alpha}
$$
Therefore $T_{y}(X)$ unifies the
following important characteristic cohomology classes of $X$:
$$
c(X)= \prod_{i=1}^{\op{dim} X} (1 + \alp_i), \quad
td(X)= \prod_{i=1}^{\op{dim} X}\displaystyle  \frac{\alpha_i}{1-e^{-\alpha_i}}, \quad
L(X)= \prod_{i=1}^{\op{dim} X} \frac{\alpha}{\tanh \alpha},
$$
which are respectively the \emph{Chern class, Todd class} and \emph{$L$-class}.
$T_y(X)$ can be considered as a parameterized Todd class $td(X)$ by $y$. We call this parameterized Todd class $T_{y}(X)$ \emph{the Hirzebruch class of $X$}.

For the distinguished three values $-1, 0, 1$ of $y$, by the definition we have the following:

\begin{itemize}
\item the Euler-Poincar\'e characteristic:

 $\chi(X) = \chi_{-1}(X) = \chi^0(X)- \chi^1(X) + \chi^2(X)- \cdots +(-1)^n\chi^n(X),$

\item the Todd genus:

$\tau (X) = \chi_0(X) = \chi^0(X), \hspace{3cm}$

\item the signature:

$\sigma(X) = \chi_1(X) = \chi^0(X)+ \chi^1(X) + \chi^2(X)+\cdots +\chi^n(X).$
\end{itemize}

\vspace{3pt}
As noted by Hirzebruch in \cite[\S 15.5]{Hir} the following duality formula holds
\begin{equation}\label{duality}
\chi^p(X) = (-1)^n \chi^{n-p}(X).
\end{equation}
This duality formula will play an important role in this paper.

\section{Multiplicativity mod $4$}

If we let
$$\chi^{\op{odd}}(X) = \chi^1(X) + \chi^3(X) + \chi^5(X)  \cdots  \quad \text{the odd part},$$
$$\chi^{\op{even}}(X) = \chi^0(X) + \chi^2(X) + \chi^4(X)  \cdots  \quad \text{the even part}, $$
then we get the following
$$\chi(X) = \chi^{\op{even}}(X)  - \chi^{\op{odd}}(X), \quad \sigma(X) = \chi^{\op{even}}(X)  + \chi^{\op{odd}}(X),$$
from which we have
\begin{equation}\label{eq1}
\sigma(X) + \chi(X) = 2 \chi^{\op{even}}(X), \quad 
\sigma(X) - \chi(X) = 2 \chi^{\op{odd}}(X),
\end{equation}

Thus from either first or second formula of (\ref{eq1})
we get the following: 

\begin{cor}[mod $2$ formula] \label{mod2}
For any fiber bundle $F \hookrightarrow E \to B$ with $F, E, B$ compact complex algebraic manifolds, we have
$$\sigma(E) \equiv \sigma(F)\sigma(B) \, \op{mod} \, 2.$$
\end{cor}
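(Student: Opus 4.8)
The plan is to reduce the mod $2$ multiplicativity of the signature to the (unconditional) multiplicativity of the Euler-Poincar\'e characteristic. The bridge between the two invariants is the second identity in (\ref{eq1}), namely $\sigma(X) - \chi(X) = 2\,\chi^{\op{odd}}(X)$. Since the right-hand side is manifestly even, this yields at once the congruence $\sigma(X) \equiv \chi(X) \pmod{2}$ for every compact complex algebraic manifold $X$.

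First I would apply this congruence to each of the three manifolds in the bundle, obtaining $\sigma(E) \equiv \chi(E)$, $\sigma(F) \equiv \chi(F)$ and $\sigma(B) \equiv \chi(B)$, all modulo $2$. Next I would invoke the classical fact, recalled in the introduction, that the Euler-Poincar\'e characteristic is multiplicative for \emph{any} topological fiber bundle, so that the genuine integer equality $\chi(E) = \chi(F)\chi(B)$ holds. Finally, since reduction modulo $2$ is a ring homomorphism and hence respects products, I would chain the congruences together as
$$\sigma(E) \equiv \chi(E) = \chi(F)\chi(B) \equiv \sigma(F)\sigma(B) \pmod{2},$$
which is exactly the asserted formula.

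There is no serious obstacle here: the whole argument rests on the single observation that $\sigma$ and $\chi$ agree modulo $2$, which is immediate from (\ref{eq1}). The only point deserving a moment's care is that $\chi(E) = \chi(F)\chi(B)$ is an \emph{exact} identity of integers rather than a mere congruence, so that it may be reduced modulo $2$ and combined with the other two congruences without any loss; this is guaranteed by the unconditional multiplicativity of the Euler characteristic for topological fiber bundles. I expect the genuinely substantive content of the paper to lie instead in the refinements to mod $4$ and mod $8$, where this clean equivalence between $\sigma$ and $\chi$ no longer suffices.
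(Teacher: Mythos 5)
Your proposal is correct and follows essentially the same route as the paper: both rest on the identities in (\ref{eq1}) showing $\sigma(X) \equiv \chi(X) \pmod{2}$ (the paper uses the first identity, you the second, and the paper itself remarks that either works) combined with the multiplicativity of the Euler characteristic. The only cosmetic difference is that the paper reduces to the product $F \times B$ and invokes $\sigma(F\times B)=\sigma(F)\sigma(B)$, whereas you apply the congruence to $F$ and $B$ separately and multiply the congruences; these are interchangeable.
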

\begin{proof} 
Indeed, using the first formula of (\ref{eq1}), we have
$\sigma(E)+\chi(E) - (\sigma(F\times B) +\chi(F \times B)) \equiv 0 \op{mod} \, 2,$
which becomes
$\sigma(E) - \sigma(F)\sigma(B) \equiv 0 \op{mod} \, 2 $,
since $\sigma(F \times B) = \sigma(F)\sigma(B)$ and $\chi(E) =\chi(F \times B)$. Thus we get the result.

\end{proof}

In \cite{Yo}, using the duality formula (\ref{duality}), we obtained $\sigma(E) \equiv \sigma(F)\sigma(B) \op{mod} 4$ for any fiber bundle $F \hookrightarrow E \to B$ with $F, E, B$ compact complex algebraic manifolds. For the sake of the reader we make a quick review of its proof. (Here we note that if $\op{dim}_{\mathbb C}E$ is odd, then either $\op{dim}_{\mathbb C}F$ or $\op{dim}_{\mathbb C}B$ is odd, thus $\sigma(E) = 0$ and $\sigma(F)\sigma(B) =0$, so $\sigma(E) \equiv \sigma(F)\sigma(B) \op{mod} 4$ holds.) 

Let $\op{dim}_{\mathbb C}X =2n$. Then, using the duality formula (\ref{duality}) we get the following:

$$\chi_y(X) =  \sum_{i=0}^{n-1}\chi^i(X) y^i \Bigl (1+ y^{2n-2i} \Bigr) + \chi^n(X)y^n.$$

Thus we have
$$ \chi(X) = \chi_{-1}(X) = \sum_{i=0}^{n-1}(-1)^i2\chi^i(X)+ (-1)^n \chi^n(X), \quad \sigma(X) =\chi_1(X) = \sum_{i=0}^{n-1} 2\chi^i(X) + \chi^n(X).$$

\begin{equation}\label{eq3}
\sigma(X) + \chi(X) = 2\sum_{i=0}^{n-1} \Bigl(1 + (-1)^i \Bigr) \chi^i(X)+ \Bigl (1 + (-1)^n \Bigr) \chi^n(X),
\end{equation}
\begin{equation}\label{eq4}
\qquad \sigma(X) - \chi(X) = 2\sum_{i=0}^{n-1} \Bigl(1 + (-1)^{i+1} \Bigr) \chi^i(X)+ \Bigl (1 + (-1)^{n+1} \Bigr) \chi^n(X).
\end{equation}

In the case when $n=2k$, (\ref{eq4}) implies that 

\begin{equation}\label{eq3.0}
\sigma(X) - \chi(X) = 4\sum_{j=1}^k \chi^{2j-1}(X),
\end{equation}

which implies the following:

\begin{cor}
For any fiber bundle $F \hookrightarrow E \to B$ such that $\op{dim}_{\mathbb C}E=2n$ with an even integer $n$, we have
$$\sigma(E) \equiv \sigma(F)\sigma(B) \op{mod} 4.$$
\end{cor}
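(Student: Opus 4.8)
The plan is to pivot everything through the mod-$4$ comparison between $\sigma$ and $\chi$, which is exactly the content of (\ref{eq3.0}). Since $\dim_{\mathbb C} E = 2n$ with $n$ even, (\ref{eq3.0}) gives $\sigma(E) \equiv \chi(E) \pmod 4$ directly, and the Euler--Poincar\'e characteristic is multiplicative for \emph{any} fiber bundle, so $\chi(E) = \chi(F)\chi(B)$. Thus $\sigma(E) \equiv \chi(F)\chi(B) \pmod 4$, and the corollary reduces to the single assertion
$$\sigma(F)\sigma(B) \equiv \chi(F)\chi(B) \pmod 4.$$
So the whole problem is to compare the product of signatures of the two factors with the product of their Euler characteristics.

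To do this I would first extract from the duality formula (\ref{duality}) a dimension-by-dimension congruence relating $\sigma$ and $\chi$. Rerunning the computation that produced (\ref{eq3.0}) but now starting from (\ref{eq4}) with $\dim_{\mathbb C} X = 2m$ for \emph{odd} $m$ gives $\sigma(X) - \chi(X) \equiv 2\chi^m(X) \pmod 4$; pairing $\chi^p$ with $\chi^{2m-p}$ in $\chi(X) = \sum_p (-1)^p \chi^p(X)$ via (\ref{duality}) yields the mod-$2$ identity $\chi(X) \equiv \chi^m(X) \pmod 2$, which upgrades the previous line to $\sigma(X) \equiv -\chi(X) \pmod 4$. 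Combined with (\ref{eq3.0}) this records the uniform statement that, for $\dim_{\mathbb C} X = 2m$,
$$\sigma(X) \equiv (-1)^m \chi(X) \pmod 4,$$
while the same pairing argument applied in odd complex dimension shows $\sigma(X)=0$ and $\chi(X)$ even.

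With these in hand I would split on the parities of $f := \dim_{\mathbb C} F$ and $b := \dim_{\mathbb C} B$, using the constraint $f + b = 2n \equiv 0 \pmod 4$. If $f$ is odd then so is $b$, so $\sigma(F)=\sigma(B)=0$ makes the right-hand side vanish, while $\chi(F)$ and $\chi(B)$ are both even, forcing $\chi(F)\chi(B)\equiv 0$; hence both sides are $\equiv 0 \pmod 4$. If instead both are even, write $f = 2f'$, $b = 2b'$ with $f' + b' = n$ even, so $f'$ and $b'$ have the same parity, and the boxed congruence gives $\sigma(F)\sigma(B) \equiv (-1)^{f'+b'}\chi(F)\chi(B) = \chi(F)\chi(B) \pmod 4$, as desired.

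The hard part will be the subcase $f \equiv b \equiv 2 \pmod 4$: here (\ref{eq3.0}) does \emph{not} apply to the fiber or the base, and the naive guess $\sigma \equiv \chi$ fails for them individually (e.g.\ $\mathbb{CP}^2$ has $\sigma = 1$, $\chi = 3$). The point I expect to need care with is therefore the sign-refined congruence $\sigma(X) \equiv -\chi(X) \pmod 4$ in dimension $\equiv 2 \pmod 4$, together with the observation that the hypothesis ``$n$ even'' is exactly what forces the two signs $(-1)^{f'}$ and $(-1)^{b'}$ to multiply to $+1$. I would double-check the two auxiliary facts feeding this (the identity $\chi \equiv \chi^m \pmod 2$ and the evenness of $\chi$ in odd dimension), both of which I expect to fall out of (\ref{duality}) by the pairing $\chi^p \leftrightarrow \chi^{2m-p}$ already used above.
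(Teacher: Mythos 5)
Your argument is correct, but it takes a genuinely different route from the paper's. The paper's proof is a two-line affair: since $F\times B$ is itself a compact complex manifold of the same complex dimension $2n$ ($n$ even) as $E$, the identity (\ref{eq3.0}) applies to \emph{both} $E$ and $F\times B$, giving $\sigma(E)\equiv\chi(E)$ and $\sigma(F\times B)\equiv\chi(F\times B) \pmod 4$; subtracting and using $\chi(E)=\chi(F\times B)=\chi(F)\chi(B)$ together with $\sigma(F\times B)=\sigma(F)\sigma(B)$ finishes it. You instead apply (\ref{eq3.0}) only to $E$ and then compare $\sigma(F)\sigma(B)$ with $\chi(F)\chi(B)$ by treating the two factors separately, which forces you to prove the sharper dimension-sensitive congruence $\sigma(X)\equiv(-1)^m\chi(X)\pmod 4$ for $\dim_{\mathbb C}X=2m$ (plus the odd-dimensional degenerate case) and to run a parity case analysis on $\dim_{\mathbb C}F$ and $\dim_{\mathbb C}B$. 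All of your auxiliary steps check out: the computation $\sigma-\chi\equiv 2\chi^m\pmod 4$ from (\ref{eq4}) for $m$ odd, the identity $\chi\equiv\chi^m\pmod 2$ from pairing $\chi^p$ with $\chi^{2m-p}$ via (\ref{duality}), and the evenness of $\chi$ in odd complex dimension are all correct (for $m=1$ your refined congruence is Noether's formula $\sigma+\chi=4\chi(\mathcal{O}_X)$ in disguise). What your approach buys is the stronger standalone statement $\sigma(X)\equiv(-1)^m\chi(X)\pmod 4$, which the paper never isolates; what it costs is length, and it misses the observation --- used repeatedly in the paper --- that one should feed the product $F\times B$ as a single manifold into the same congruence satisfied by $E$, which makes all the case analysis unnecessary.
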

The proof is just like the above case of mod 2 formula.

Next, we consider the case when $n=2k+1$. (\ref{eq3}) implies that
\begin{equation}\label{eq6.-1}
\sigma(X) + \chi(X) = 4\sum_{j=0}^{k} \chi^{2j}(X),
\end{equation}

which implies the following

\begin{cor}
For any fiber bundle $F \hookrightarrow E \to B$ such that $\op{dim}_{\mathbb C}E=2n$ with an odd integer $n$,
$$\sigma(E) \equiv \sigma(F)\sigma(B) \op{mod} 4.$$
\end{cor}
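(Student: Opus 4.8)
The plan is to run the same formal argument used for the mod $2$ formula in Corollary \ref{mod2} and for the even-$n$ case just above, but now feeding in the divisibility identity (\ref{eq6.-1}) in place of (\ref{eq1}). The whole point is that (\ref{eq6.-1}) exhibits $\sigma(X) + \chi(X)$ as $4\sum_{j=0}^{k}\chi^{2j}(X)$, an explicit multiple of $4$, so that $\sigma(X) + \chi(X) \equiv 0 \pmod 4$ for \emph{every} compact complex algebraic manifold $X$ with $\dim_{\mathbb C} X = 2n$ and $n$ odd. (Each $\chi^{2j}(X)$ is an integer, being an alternating sum of dimensions of cohomology groups, so the right-hand side is genuinely $4$ times an integer.)

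First I would apply this congruence to $X = E$, which has $\dim_{\mathbb C} E = 2n$ with $n$ odd by hypothesis, obtaining $\sigma(E) + \chi(E) \equiv 0 \pmod 4$. Next I would check that the product $F \times B$ meets the same hypothesis: it is again a compact complex algebraic manifold, and $\dim_{\mathbb C}(F \times B) = \dim_{\mathbb C} F + \dim_{\mathbb C} B = \dim_{\mathbb C} E = 2n$ with $n$ odd, so (\ref{eq6.-1}) applies to it as well and gives $\sigma(F \times B) + \chi(F \times B) \equiv 0 \pmod 4$. Subtracting the two congruences yields
$$\bigl(\sigma(E) - \sigma(F \times B)\bigr) + \bigl(\chi(E) - \chi(F \times B)\bigr) \equiv 0 \pmod 4.$$

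To finish I would invoke the two multiplicativity facts recalled in the introduction. The Euler-Poincar\'e characteristic is multiplicative for \emph{any} topological fiber bundle, so $\chi(E) = \chi(F)\chi(B) = \chi(F \times B)$ and the second parenthesis vanishes; and the signature is multiplicative for ordinary products, so $\sigma(F \times B) = \sigma(F)\sigma(B)$. Substituting these gives $\sigma(E) - \sigma(F)\sigma(B) \equiv 0 \pmod 4$, which is the assertion. I do not expect a real obstacle here: the argument is purely formal once (\ref{eq6.-1}) is available, and the only step demanding any attention is confirming that the product $F \times B$ still satisfies the dimension hypothesis needed to invoke (\ref{eq6.-1}), which is immediate from additivity of complex dimension.
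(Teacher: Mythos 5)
Your proposal is correct and matches the paper's own argument: the paper proves this corollary by remarking that "the proof is just like the above case of the mod 2 formula," i.e., precisely the subtraction argument you spell out, applying the identity $\sigma(X)+\chi(X)=4\sum_{j=0}^{k}\chi^{2j}(X)$ to both $E$ and $F\times B$ and then using $\chi(E)=\chi(F\times B)$ together with $\sigma(F\times B)=\sigma(F)\sigma(B)$. Your only addition is the (correct and harmless) explicit check that $F\times B$ satisfies the same dimension hypothesis as $E$.
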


Now we will discuss multiplicativity mod $4$ of $\chi_y$-genera of a fiber bundle $F \hookrightarrow E \to B$ with $F, E, B$ compact complex algebraic manifolds. The following congruence is crucial for both the mod $4$ result and for the mod $8$ result.

\begin{pro}\label{prop}
$$\chi_y(X) \equiv \frac{\sigma(X)}{2}(1+y) + \frac{\chi(X)}{2}(1-y) \,\, \op{mod} \, \,1-y^2,$$
where considering $\op{mod} 1-y^2$ means ``letting $y^2=1$ in the polynomial $\chi_y(X)$".
\end{pro}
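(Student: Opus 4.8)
The plan is to compute $\chi_y(X)$ directly from its definition as the generating polynomial $\sum_{p \geq 0} \chi^p(X) y^p$ and then perform the reduction modulo $1 - y^2$. Since reducing modulo $1 - y^2$ amounts to substituting $y^2 = 1$, every monomial $y^{2k}$ collapses to $1$ and every monomial $y^{2k+1}$ collapses to $y$. Grouping the coefficients accordingly, I expect
$$\chi_y(X) \equiv \chi^{\op{even}}(X) + \chi^{\op{odd}}(X)\, y \,\,\op{mod}\,\, 1 - y^2,$$
where $\chi^{\op{even}}(X)$ and $\chi^{\op{odd}}(X)$ are the even and odd parts introduced at the start of this section.

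Next I would translate from the pair $\bigl(\chi^{\op{even}}(X), \chi^{\op{odd}}(X)\bigr)$ back to the more familiar invariants $\sigma$ and $\chi$. The identities $\chi(X) = \chi^{\op{even}}(X) - \chi^{\op{odd}}(X)$ and $\sigma(X) = \chi^{\op{even}}(X) + \chi^{\op{odd}}(X)$ recorded just before this proposition invert to give $\chi^{\op{even}}(X) = \tfrac{1}{2}\bigl(\sigma(X) + \chi(X)\bigr)$ and $\chi^{\op{odd}}(X) = \tfrac{1}{2}\bigl(\sigma(X) - \chi(X)\bigr)$. Substituting these into the congruence above and collecting the constant term together with the coefficient of $y$ then yields
$$\chi_y(X) \equiv \frac{\sigma(X)}{2}(1 + y) + \frac{\chi(X)}{2}(1 - y) \,\,\op{mod}\,\, 1 - y^2,$$
which is exactly the assertion.

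The computation is essentially formal, so I do not anticipate a genuine obstacle; the only point requiring a little care is the interpretation of the congruence. One should fix that the reduction is taken in the polynomial ring $\bQ[y]$ modulo the ideal generated by $1 - y^2$, so that the stated right-hand side is the unique representative of degree at most one. Equivalently, because $1 - y^2 = (1-y)(1+y)$ with the two factors coprime, the congruence can be verified by evaluating both sides at $y = 1$ (where $\chi_y(X)$ specializes to $\sigma(X) = \chi_1(X)$) and at $y = -1$ (where $\chi_y(X)$ specializes to $\chi(X) = \chi_{-1}(X)$); since a polynomial is determined modulo $1-y^2$ by its values at the two roots $\pm 1$, this gives an independent confirmation that relies only on the defining specializations of $\sigma$ and $\chi$.
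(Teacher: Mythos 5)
Your proof is correct and follows essentially the same route as the paper's: reduce $\sum_p \chi^p(X)y^p$ modulo $1-y^2$ to get $\chi^{\op{even}}(X)+\chi^{\op{odd}}(X)y$, then substitute $\chi^{\op{even}}=\tfrac12(\sigma+\chi)$ and $\chi^{\op{odd}}=\tfrac12(\sigma-\chi)$ from (\ref{eq1}) and regroup. The extra verification by evaluation at $y=\pm1$ is a nice sanity check but not part of the paper's argument.
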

\begin{proof}
\begin{align*}
\chi_y(X) & = \sum \chi^i(X)y^i \\
& \equiv \chi^0(X) +\chi^1(X)y + \chi^2(X) +\chi^3(X)y +\chi^4(X) + \chi^5(X)y + \cdots   \,\,\,  \op{mod} \, \,1-y^2\\
& = \chi^{\op{even}}(X) + \chi^{\op{odd}}(X)y  \,\, \, \op{mod} \, \,1-y^2\\
& = \frac{\sigma(X) + \chi(X)}{2} + \frac{\sigma(X) - \chi(X)}{2}y   \,\, \op{mod} \, \,1-y^2  \quad \text{(using (\ref{eq1}))}\\
& =\frac{\sigma(X)}{2}(1+y) + \frac{\chi(X)}{2}(1-y)   \,\, \op{mod} \, \,1-y^2.
\end{align*}
\end{proof}

\begin{rem} The polynomial $1-y^2$ is zero at $y=-1, 1$, for which we have the special values $\chi_{-1}(X) = \chi(X)$ and $\chi_1(X) = \sigma(X)$.
\end{rem}

Now we are ready to prove the following
\begin{thm} \label{mod4-chiy} Let $F \hookrightarrow E \to B$ be a fiber bundle such that $F, E, B$ are compact complex algebraic manifolds. Let $y$ be an odd integer, then
$$\chi_y(E) \equiv \chi_y(F)\chi_y(B) \, \op{mod} \, 4.$$
\end{thm}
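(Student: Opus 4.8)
The plan is to feed the three manifolds $E$, $F$, $B$ into the congruence of Proposition \ref{prop} and then multiply the statements for $F$ and $B$ together, working in the quotient obtained by setting $y^2=1$. The only algebraic facts about this reduction that I need are the ``near-idempotent'' relations $(1+y)^2 \equiv 2(1+y)$, $(1-y)^2 \equiv 2(1-y)$ and $(1+y)(1-y) = 1-y^2 \equiv 0$, all modulo $1-y^2$. Using these to expand the product, every cross term carries the factor $(1+y)(1-y)$ and vanishes, so that
$$\chi_y(F)\chi_y(B) \equiv \frac{\sigma(F)\sigma(B)}{2}(1+y) + \frac{\chi(F)\chi(B)}{2}(1-y) \pmod{1-y^2}.$$

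Next I would subtract this from the expression for $\chi_y(E)$ supplied by Proposition \ref{prop}. Since the Euler--Poincar\'e characteristic is exactly multiplicative, $\chi(E)=\chi(F)\chi(B)$, the entire $(1-y)$-part cancels and I am left with
$$\chi_y(E) - \chi_y(F)\chi_y(B) \equiv \frac{\sigma(E) - \sigma(F)\sigma(B)}{2}(1+y) \pmod{1-y^2}.$$
At this point the mod $4$ multiplicativity of the signature, already obtained in the two corollaries preceding Proposition \ref{prop}, becomes available: it gives $\sigma(E)-\sigma(F)\sigma(B) = 4k$ for some $k \in \bZ$, so that the coefficient $\tfrac{1}{2}\bigl(\sigma(E)-\sigma(F)\sigma(B)\bigr) = 2k$ is even.

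Finally I would specialize the underlying polynomial identity to the given odd integer $y$. Because $1-y^2 = -(y^2-1)$ is monic up to sign, integer polynomial division lets me write $\chi_y(E) - \chi_y(F)\chi_y(B) = 2k(1+y) + (1-y^2)h(y)$ with $h \in \bZ[y]$. For odd $y$ one has $1+y \equiv 0 \pmod 2$ and $1-y^2 \equiv 0 \pmod 8$; hence $2k(1+y) \equiv 0 \pmod 4$ and $(1-y^2)h(y) \equiv 0 \pmod 8$, so the difference is divisible by $4$, which is exactly the assertion.

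The step I expect to be the main obstacle is the $2$-adic bookkeeping rather than any single computation: one must see that three independent factors of $2$ — the one hidden in $2k$ coming from mod $4$ multiplicativity of $\sigma$, the one in $1+y$ coming from $y$ being odd, and the divisibility $8 \mid 1-y^2$ controlling the remainder term — combine to yield precisely the factor $4$ required. A minor preliminary check underlies this: all the halved quantities $\tfrac{1}{2}\bigl(\sigma(X)\pm\chi(X)\bigr)$ are integers, since $\sigma(X) \equiv \chi(X) \pmod 2$ for every $X$, and this same parity fact legitimises the integrality of the quotient $h(y)$.
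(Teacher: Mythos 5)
Your proposal is correct and follows essentially the same route as the paper: both derive from Proposition \ref{prop} the key congruence $\chi_y(E) - \chi_y(F)\chi_y(B) \equiv \tfrac{1}{2}\bigl(\sigma(E)-\sigma(F)\sigma(B)\bigr)(1+y) \bmod 1-y^2$ (you merely spell out the multiplication step via the relations $(1\pm y)^2 \equiv 2(1\pm y)$ and $\chi(E)=\chi(F)\chi(B)$, which the paper leaves implicit) and then conclude using mod $4$ multiplicativity of the signature together with $2 \mid 1+y$ and the divisibility of $1-y^2$ for odd $y$. The extra care you take with integrality of the quotient $h(y)$ and of the halved quantities is sound but not a different argument.
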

\begin{proof}
From Proposition \ref{prop}, we can see that for any fiber bundle $F \hookrightarrow E \to B$ we have
$$\chi_y(E) - \chi_y(F)\chi_y(B) \equiv \frac{\sigma(E) - \sigma(F)\sigma(B)}{2}(1+y) \,\, \op{mod}\,\, 1-y^2.$$

We have shown before that $\sigma(E) \equiv \sigma(F)\sigma(B) \, \, \op{mod} \, \, 4$, i.e. $\sigma(E) -\sigma(F)\sigma(B)$ is divisible by $4$, thus $\displaystyle \frac{\sigma(E) - \sigma(F)\sigma(B)}{2}$ is even. Since $y$ is odd, $1+y$ is even, thus $\displaystyle \frac{\sigma(E) - \sigma(F)\sigma(B)}{2}(1+y) \equiv 0 \,\, \op{mod} \, \, 4$, and $1-y^2 = (1-y)(1+y) \equiv 0 \,\, \op{mod} \, \, 4$. Thus we have
$\chi_y(E) -\chi_y(F)\chi_y(B) \equiv 0 \, \op{mod} \, 4,$
 i.e.  for \emph{any} odd integer $y$ we have
 $$\chi_y(E) \equiv \chi_y(F)\chi_y(B) \, \op{mod} \, 4.$$
\end{proof}

\section{Multiplicativity mod $8$}
We will now investigate multiplicativity modulo $8$ and prove the main results of this note mentioned in the introduction.

\begin{thm}\label{mod8-part1}
 Let $F \hookrightarrow E \to B$ be a fiber bundle such that $F, E, B$ are compact complex algebraic manifolds. Let $y$ be an odd integer, then
\begin{enumerate}
\item If $y \equiv 3 \, \op{mod} \, 4$, then $\chi_y(E) \equiv \chi_y(F)\chi_y(B) \, \op{mod} \, 8.$
\item If $y \equiv 1 \, \op{mod} \, 4$, then $\chi_y(E) \equiv \chi_y(F)\chi_y(B) \, \op{mod} \, 8 \Longleftrightarrow \, \sigma(E) \equiv \sigma(F)\sigma(B) \, \op{mod} \, 8.$
\end{enumerate}
\end{thm}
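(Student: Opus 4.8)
The plan is to promote the polynomial congruence of Proposition \ref{prop} into a genuine congruence of \emph{integers} modulo $8$, valid for every odd integer $y$. The mechanism is a pair of elementary number-theoretic facts: if $y$ is odd then $y^2-1=(y-1)(y+1)$ is a product of two consecutive even integers, hence $1-y^2\equiv 0\pmod 8$ and $y^2\equiv 1\pmod 8$. Since $\chi_y(X)=\sum_i\chi^i(X)y^i$ has integer coefficients and the reduction replaces $y^{2k}$ by $1$ and $y^{2k+1}$ by $y$, the difference $\chi_y(X)-\bigl(\chi^{\op{even}}(X)+\chi^{\op{odd}}(X)y\bigr)$ is $(y^2-1)$ times a polynomial in $\mathbb Z[y]$. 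Evaluating at an odd integer $y$ and using $1-y^2\equiv 0\pmod 8$, I would conclude the integer congruence
$$\chi_y(X)\equiv \chi^{\op{even}}(X)+\chi^{\op{odd}}(X)\,y \equiv \frac{\sigma(X)}{2}(1+y)+\frac{\chi(X)}{2}(1-y)\pmod 8.$$
This step is the conceptual crux, and I expect it to be the main (if modest) obstacle: one must check that the mod-$(1-y^2)$ remainder carries integer coefficients so that the substitution is legitimate, rather than merely a formal polynomial identity.

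Next I would compute $\chi_y(E)-\chi_y(F)\chi_y(B)$ modulo $8$ from this formula. Writing $p_X=\chi^{\op{even}}(X)$ and $q_X=\chi^{\op{odd}}(X)$, so that $\chi_y(X)\equiv p_X+q_Xy$, the product $\chi_y(F)\chi_y(B)$ produces a quadratic cross-term $q_Fq_B\,y^2$, which I collapse using $y^2\equiv 1\pmod 8$ to obtain $\chi_y(F)\chi_y(B)\equiv (p_Fp_B+q_Fq_B)+(p_Fq_B+q_Fp_B)y$. I would then feed in the two multiplicativity inputs already available: the \emph{exact} multiplicativity $\chi(E)=\chi(F)\chi(B)$ of the Euler characteristic, and the mod-$4$ multiplicativity $\sigma(E)\equiv\sigma(F)\sigma(B)\pmod 4$ proved in Theorem \ref{mod4-chiy} and its corollaries. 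Expanding $p_X,q_X$ in terms of $\sigma(X),\chi(X)$, the Euler terms cancel against each other, and a short calculation shows that the constant coefficient and the coefficient of $y$ \emph{coincide}, each equal to $\tfrac{1}{2}\bigl(\sigma(E)-\sigma(F)\sigma(B)\bigr)$.

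This yields the master congruence
$$\chi_y(E)-\chi_y(F)\chi_y(B)\equiv \frac{\sigma(E)-\sigma(F)\sigma(B)}{2}\,(1+y)\pmod 8,$$
from which both parts of the theorem follow by a case split on $y\bmod 4$. Since $\sigma(E)-\sigma(F)\sigma(B)$ is divisible by $4$, write it as $4k$; the right-hand side becomes $2k(1+y)$. If $y\equiv 3\pmod 4$ then $4\mid(1+y)$, so $2k(1+y)\equiv 0\pmod 8$ unconditionally, giving part (1). If $y\equiv 1\pmod 4$ then $1+y=2m$ with $m$ odd, so $2k(1+y)=4km\equiv 4k\pmod 8$, and $4k\equiv 0\pmod 8$ exactly when $k$ is even, i.e. when $\sigma(E)\equiv\sigma(F)\sigma(B)\pmod 8$; this is the equivalence in part (2). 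I would remark that the master congruence already anticipates the Arf--Kervaire refinement of the second part of Theorem \ref{mod8}(b), since it pins the obstruction to mod-$8$ multiplicativity of $\chi_y$ precisely to the residue of $\tfrac14\bigl(\sigma(E)-\sigma(F)\sigma(B)\bigr)$ modulo $2$.
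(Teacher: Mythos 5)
Your proposal is correct and follows essentially the same route as the paper: both reduce $\chi_y$ modulo $1-y^2$ to $\tfrac{\sigma}{2}(1+y)+\tfrac{\chi}{2}(1-y)$ (Proposition \ref{prop}), derive the master congruence $\chi_y(E)-\chi_y(F)\chi_y(B)\equiv\tfrac{1}{2}\bigl(\sigma(E)-\sigma(F)\sigma(B)\bigr)(1+y)$, observe that $1-y^2\equiv 0\pmod 8$ for odd $y$, and conclude by the same case split on $y\bmod 4$ using the mod $4$ multiplicativity of the signature. Your explicit check that the remainder modulo $1-y^2$ has integer coefficients is a slightly more careful rendering of a point the paper treats implicitly, but it is not a different argument.
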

\begin{proof}Again using Proposition \ref{prop}, we can see that for any fiber bundle $F \hookrightarrow E \to B$ we have
$$\chi_y(E) - \chi_y(F)\chi_y(B) \equiv \frac{\sigma(E) - \sigma(F)\sigma(B)}{2}(1+y) \,\, \op{mod}\,\, 1-y^2.$$
\begin{enumerate}
\item First we observe that if $y$ is odd, then in fact we have $1-y^2 \equiv 0 \,\, \op{mod} \,\, 8.$ Indeed, let $y=2k+1$. Then $1-y^2=(1-y)(1+y) = -2k(2k+2) = -2\cdot 2\cdot k(k+1)$ is divisible by $2 \cdot 2 \cdot 2=8$ because $k(k+1)$ is always even. Let $y \equiv 3 \,\, \op{mod}\, \, 4$, i.e. $1+y$ is divisible by $4$. Then we have $\chi_y(E) - \chi_y(F)\chi_y(B) \equiv \displaystyle  \frac{\sigma(E) - \sigma(F)\sigma(B)}{2}(1+y) \equiv 0 \,\, \op{mod} \, \, 8$. Therefore we have
$$y \equiv 3 \, \op{mod} \, 4 \Longrightarrow \chi_y(E) \equiv \chi_y(F)\chi_y(B) \, \op{mod} \, 8.$$

\item
Let $y \equiv 1 \, \, \op{mod}\, \, 4$, i.e. let $y=4k+1$.
Then $1+y = 4k+2 = 2(2k+1)$. Thus
$$\chi_y(E) -\chi_y(F)\chi_y(B) \equiv \frac{\sigma(E) -\sigma(F)\sigma(B)}{2}(1+y) \equiv 0\,\, \op{mod} \, \,8$$
if and only if
$$\frac{\sigma(E) -\sigma(F)\sigma(B)}{2} \equiv 0 \,\, \op{mod} \, \, 4.$$
I.e.,
$$\chi_y(E) \equiv  \chi_y(F)\chi_y(B)\,\, \op{mod}\,\, 8 \Longleftrightarrow  \sigma(E) \equiv  \sigma(F)\sigma(B)\,\, \op{mod}\,\, 8 $$
\end{enumerate}

\end{proof}

\begin{rem} If we consider $\chi_y(X)$ modulo $y-y^3$, i.e., by letting $y^3 = y$ in $\chi_y(X)$, then we have

$$\chi_y(X) \equiv \tau(X)(1-y^2) + \frac{\chi(X)}{2}(y^2 -y) + \frac{\sigma(X)}{2}(y^2+y) \, \, \op{mod} \,\, y-y^3.$$

In this case we have the following formula:
$$\chi_y(E) -\chi_y(F)\chi_y(B) \equiv (\tau(E) -\tau(F)\tau(B))(1-y^2) + \frac{\sigma(E)-\sigma(F)\sigma(B)}{2}(y^2+y) \,\, \op{mod} \, \, y-y^3.$$

Note that $y-y^3$ has zeros at $y=0, -1,1$ and we have $\chi_0(X) =\tau(X),\chi_{-1}(X) =\chi(X), \chi_1(X) =\sigma(X).$

\end{rem}

In Theorem \ref{mod8-part1} 
(2) we have shown that when $y \equiv 1 \mod{4}$, the $\chi_y$-genera of a fiber bundle have the same multiplicativity behaviour as the signature modulo $8$. This means that we can use the results about the multiplicativity of signature modulo $8$ from \cite{Rov, Rov2} to prove statements for $\chi_y$-genera with $y \equiv 1 \mod{4}$. The first of these statements involves the definition of the Arf invariant of a certain $\zz_2$-quadratic form associated to the fiber bundle. For the convenience of the reader we recall here some relevant definitions and give precise references of where some necessary proofs can be found.

We start by defining a non-singular quadratic form over $\zz_2$ and the Arf invariant.
Let $V$ be a $\bb{Z}_2$-vector space and $\lambda$ a non-singular symmetric bilinear form
$$\lambda: V \otimes V \to \bb{Z}_2,$$
and let $h: V \to \bb{Z}_2$ be a $\bb{Z}_2$-valued quadratic enhancement of this bilinear form which satisfies the following property,
$$h(x+y) = h(x) + h(y) +\lambda (x, y) \in \bb{Z}_2.$$
The Arf invariant was first defined in \cite{Arf} as follows,
\begin{defn} With a symplectic basis $\left\{ e_1, \dots, e_k, \bar{e}_1, \dots, \bar{e}_k   \right\}$ for $V$, the Arf invariant is defined as
$$\textnormal{Arf}(h) = \sum\limits_{j=1}^{k} h(e_j)h(\bar{e}_j) \in \bb{Z}_2.$$
\end{defn}

A \textit{characteristic element} of a symmetric form $(V, \lambda)$ is an element $v \in V$ such that for any $u \in V$ one has:
$$\lambda(u, u) = \lambda(u, v) \in \bb{Z}_2.$$
For example the Wu class $v_{2k}(M) \in H^{2k}(M; \zz_2)$ of a $4k$-dimensional manifold $M$ is a characteristic element of the intersection form.

A \textit{sublagrangian subspace} of a symmetric form $(V, \lambda)$ is a subspace $L$ such that $\lambda(L, L)=0$.

A \textit{Lagrangian subspace} of a symmetric form $(V, \lambda)$ is a subspace $L$ such that $\lambda(L, L)=0$ and $\textnormal{dim} L = \frac{1}{2} \textnormal{dim} V$.

The relation between the signature modulo $8$ of a manifold and the Arf invariant was investigated in \cite[Proposition 2.4.5 and Theorem 4.3.5.]{Rov}. This relation is intricately connected to the $\zz_8$-valued Brown-Kervaire invariant defined in \cite{Bro} and Morita's theorem \cite[Theorem 1.1]{Morita}. Morita's theorem requires the use of the \textit{Pontryagin squares} $\mathcal{P}_2.$ A good reference for this is \cite[Chapter 2]{Mosh-Tang}. The relation states that when the signature of a $4k$-dimensional manifold is divisible by $4$, then modulo $8$ this signature can be expressed as $4$ times the Arf-Kervaire invariant of an associated $\zz_2$-valued quadratic form. The details about how to construct this associated quadratic form are given in \cite[Proposition 2.4.5 and Theorem 4.3.5.]{Rov} and the construction for the case of a fiber bundle is given in Theorem \ref{Arf-chiy}.

\begin{thm}\label{Arf-chiy} Let $F \hookrightarrow E \to B$ be a fiber bundle such that $F, E, B$ are smooth compact complex algebraic varieties. 
If $y \equiv 1 \, \op{mod} \, 4$, then
$$\chi_y(E) - \chi_y(F) \chi_y(B) \equiv 4 \textnormal{Arf}(W, \mu, h)  \pmod{8}.$$
where $(W, \mu, h)= \left( L^{\perp}/L , [\lambda \oplus - \lambda'], \left[ \mathcal{P}_2 \oplus-\mathcal{P}'_2 \right]/2 \right)$ with
\begin{itemize}
\item $L = \langle v_{2k} \rangle \subset L^{\perp}$, with $v_{2k}=v_{2k}(E) \oplus v_{2k}(F \times B) \in H^{2k}(E;\bZ_2) \oplus H^{2k}(F \times B;\bZ_2)$ the Wu class of $E \sqcup F \times B$,
\item $L^{\perp} =\left\{(x, x') \in H^{2k}(E;\bZ_2) \oplus H^{2k}(F \times B;\bZ_2) \,\vert\, \lambda(x,x) =  \lambda'(x', x') \in \bZ_2 \right\}$,
\item $\mathcal{P}_2$ and $\mathcal{P'}_2$ are the Pontryagin squares of $E$ and $F \times B$ respectively.
\end{itemize}
\end{thm}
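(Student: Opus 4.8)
The plan is to first collapse the $\chi_y$-statement into a purely signature-theoretic statement, and then to feed the resulting signature difference into the mod-$8$ machinery of \cite{Rov}. For the first step I would argue exactly as in the proof of Theorem \ref{mod8-part1}(2), applying Proposition \ref{prop} to both $E$ and $F\times B$. Using that the Euler characteristic is multiplicative for any fiber bundle ($\chi(E)=\chi(F)\chi(B)=\chi(F\times B)$) together with the product multiplicativity $\chi_y(F)\chi_y(B)=\chi_y(F\times B)$, the $\chi$-terms cancel and one is left with
$$\chi_y(E)-\chi_y(F)\chi_y(B)\equiv \frac{\sigma(E)-\sigma(F\times B)}{2}(1+y)\pmod{1-y^2}.$$
Writing $y=4k+1$, so that $1+y=2(2k+1)$ with $2k+1$ odd, and recalling that $1-y^2\equiv 0\pmod 8$ for odd $y$, this is a genuine congruence modulo $8$:
$$\chi_y(E)-\chi_y(F)\chi_y(B)\equiv \bigl(\sigma(E)-\sigma(F\times B)\bigr)(2k+1)\pmod 8.$$

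Next I would sharpen this. By the mod-$4$ multiplicativity of the signature established earlier in Section~3 (see also \cite{Yo}), and by $\sigma(F\times B)=\sigma(F)\sigma(B)$, the quantity $\sigma(E)-\sigma(F\times B)$ is divisible by $4$. Since multiplying a multiple of $4$ by the odd integer $2k+1$ leaves it unchanged modulo $8$, this yields the clean identity
$$\chi_y(E)-\chi_y(F)\chi_y(B)\equiv \sigma(E)-\sigma(F\times B)\pmod 8.$$
Thus the theorem is equivalent to the signature-theoretic statement $\sigma(E)-\sigma(F\times B)\equiv 4\,\textnormal{Arf}(W,\mu,h)\pmod 8$, and here I would appeal directly to Rovi's results. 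The key input is that for a closed oriented $4k$-manifold whose signature is divisible by $4$, the signature modulo $8$ equals $4$ times the Arf invariant of a $\bb{Z}_2$-valued quadratic form built from the Pontryagin square; this is the combination of the Brown--Kervaire invariant, Morita's theorem, and the explicit constructions of \cite[Proposition 2.4.5 and Theorem 4.3.5]{Rov}. Applying this to the virtual manifold $E\sqcup -(F\times B)$, whose signature $\sigma(E)-\sigma(F\times B)$ is divisible by $4$ by the mod-$4$ theorem, produces precisely the quadratic form $(W,\mu,h)=(L^\perp/L,[\lambda\oplus -\lambda'],[\mathcal P_2\oplus -\mathcal P'_2]/2)$ in the statement: the Wu class $v_{2k}=v_{2k}(E)\oplus v_{2k}(F\times B)$ is the characteristic element, $L=\langle v_{2k}\rangle$ is the sublagrangian one quotients by, and the Pontryagin square descends on $L^\perp/L$, after division by $2$, to the required quadratic refinement of $\mu=[\lambda\oplus -\lambda']$.

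The step I expect to require the most care is verifying that the Pontryagin square genuinely descends to a well-defined, nonsingular $\bb{Z}_2$-valued quadratic enhancement $h$ on the subquotient $L^\perp/L$, and that its Arf invariant computes the signature of the virtual manifold modulo $8$ rather than some shifted version of it; this is exactly where the compatibility of the characteristic element (the Wu class) with the Pontryagin square enters and Morita's theorem must be invoked correctly. Since this descent is carried out in \cite[Proposition 2.4.5 and Theorem 4.3.5]{Rov}, the remaining obstacle on our side is essentially bookkeeping: confirming that the orientation conventions and the signs in $\lambda\oplus -\lambda'$ and $\mathcal P_2\oplus -\mathcal P'_2$ agree with those under which Rovi's theorem is stated, so that the difference manifold $E\sqcup -(F\times B)$ feeds correctly into that machinery and the two reductions combine to give the asserted formula.
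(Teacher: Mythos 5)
Your proposal is correct and follows essentially the same route as the paper, whose proof is a one-line appeal to Rovi's signature-mod-$8$ Arf-invariant theorem for fiber bundles (\cite[Theorem 6.2.1]{Rov}) combined with Theorem \ref{mod8-part1}. The only genuine addition is that you make explicit the congruence $\chi_y(E)-\chi_y(F)\chi_y(B)\equiv\sigma(E)-\sigma(F)\sigma(B)\pmod{8}$ for $y\equiv 1\ \op{mod}\ 4$, which is the precise statement needed here and is contained in the proof of Theorem \ref{mod8-part1}(2) but not in its statement, which only records the equivalence of the two multiplicativity conditions.
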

\begin{proof}This theorem is a direct consequence of \cite[Theorem 6.2.1]{Rov} and Theorem \ref{mod8-part1}.
\end{proof}

The other statement which follows from the work on the signature modulo $8$ in \cite{Rov} and \cite{Rov2} is as follows:
\begin{thm} Let $F \hookrightarrow E \to B$ be a fiber bundle such that $F, E, B$ are smooth compact complex algebraic varieties with $\textnormal{dim}_{\bR} F =2m$. If the action of $\pi_1(B)$ on $H^m(F, \bZ)/torsion \otimes \bZ_4$ is trivial, then for \emph{any} odd integer $y$
$$\chi_y(E) \equiv \chi_y(F)\chi_y(B) \, \op{mod} \, 8.$$
\end{thm}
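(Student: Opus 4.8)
The plan is to reduce everything to the two parts of Theorem \ref{mod8-part1} by splitting on the residue of $y$ modulo $4$, and to import from C. Rovi's work the single nontrivial ingredient, namely multiplicativity of the signature mod $8$ under the stated monodromy condition. First I would dispose of the case $y \equiv 3 \, \op{mod}\, 4$: here part (1) of Theorem \ref{mod8-part1} already yields
$$\chi_y(E) \equiv \chi_y(F)\chi_y(B) \, \op{mod}\, 8$$
for \emph{every} fiber bundle of compact complex algebraic manifolds, with no hypothesis on the fundamental group at all. So in this residue class the triviality assumption on the $\pi_1(B)$-action is simply not needed, and the congruence holds a fortiori.

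The content therefore lies entirely in the case $y \equiv 1 \, \op{mod}\, 4$. Here part (2) of Theorem \ref{mod8-part1} gives the equivalence
$$\chi_y(E) \equiv \chi_y(F)\chi_y(B) \, \op{mod}\, 8 \Longleftrightarrow \sigma(E) \equiv \sigma(F)\sigma(B) \, \op{mod}\, 8,$$
so it suffices to establish the right-hand congruence. This is exactly Rovi's mod $8$ signature theorem from \cite{Rov, Rov2}: under the hypothesis that $\pi_1(B)$ acts trivially on $H^m(F,\bZ)/torsion \otimes \bZ_4$, with $\op{dim}_{\bR} F = 2m$, one has $\sigma(E) \equiv \sigma(F)\sigma(B) \, \op{mod}\, 8$. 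Feeding this signature statement through the equivalence above closes the case and completes the proof for all odd $y$. Concretely, I would structure the argument as: (i) recall Theorem \ref{mod8-part1}; (ii) settle $y \equiv 3 \, \op{mod}\, 4$ unconditionally; (iii) for $y \equiv 1 \, \op{mod}\, 4$, invoke the monodromy hypothesis together with \cite[Theorem 3.5]{Rov} to obtain signature multiplicativity mod $8$, then transport it across the equivalence of Theorem \ref{mod8-part1}(2).

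I do not expect a genuine mathematical obstacle on the $\chi_y$ side, since all of the deep topological work is outsourced to Rovi's signature result; the entire statement is a two-line deduction once that result is in hand. The one point requiring care is the precise alignment of hypotheses: Rovi's theorem is formulated for closed, compatibly oriented manifolds, so I would explicitly note that a smooth fiber bundle of compact complex algebraic manifolds is closed, oriented (by the complex structures) and hence compatibly oriented, so that \cite{Rov} applies verbatim. I would also confirm that the coefficient module $H^m(F,\bZ)/torsion \otimes \bZ_4$ and the notion of a trivial $\pi_1(B)$-action on it are used in the same sense as in \cite{Rov}. With the hypotheses matched, nothing further is needed beyond the cited results.
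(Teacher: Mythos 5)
Your proposal is correct and follows essentially the same route as the paper: invoke Rovi's mod $8$ signature multiplicativity under the monodromy hypothesis (\cite[Theorem 6.3.1]{Rov}, or \cite[Theorem 3.5]{Rov2}) and transport it through the equivalence of Theorem \ref{mod8-part1}(2). Your explicit case split, noting that for $y \equiv 3 \, \op{mod} \, 4$ the congruence already holds unconditionally by Theorem \ref{mod8-part1}(1), is slightly more careful than the paper's proof, which cites only part (2) even though the statement covers all odd $y$.
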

\begin{proof}
In \cite[Theorem 6.3.1]{Rov} it is shown that if the action of $\pi_1(B)$ on $H^m(F, \bZ)/torsion  \otimes \bZ_4$ is trivial, then
$$\sigma(E) \equiv \sigma(F) \sigma(B)  \, \op{mod} \, 8.$$
For a more succinct proof of this result see \cite[Theorem 3.5]{Rov2}.
Combining this  with Theorem \ref{mod8-part1} 
(2) we obtain
$$\chi_y(E) \equiv \chi_y(F)\chi_y(B) \, \op{mod} \, 8.$$
\end{proof}

\section {Concluding remarks}

\begin{rem}
As to the congruence formulae for $\chi_y$, above we consider the case when $y$ is an odd integer. When it comes to the case when $y$ is an even integer, we have not found any interesting congruence formula.
\end{rem}

\begin{rem}
 In \cite{CLMS2} Cappell, Libgober, Maxim and Shaneson obtain the following Atiyah-Meyer type formula:
$$\chi_y(E) = \int_B ch^*(\chi_y(\pi))\cup \widetilde T_y^*(TB),$$
where $ch^*$ is the Chern character, $\chi_y(\pi)$ is the $K$-theory $\chi_y$-characteristic of the bundle projection map $\pi:E \to B$ and $\widetilde T_y^* (TB)$ is the unnormalized Hirzebruch class. For more detailed explanation of these see \cite{CLMS2}. Since $ch^0(\chi_y(\pi)) = \chi_y(F)$, as explained in \cite{CLMS2}, the right-hand-side of the above Atiyah-Meyer type formula is
$$\int_B ch^*(\chi_y(\pi))\cup \widetilde T_y^*(TB) = \chi_y(F)\chi_y(B) + \text{correction terms},$$
where the correction terms measure the deviation from multiplicativity.
Hence it follows from our results above that

\begin{enumerate}
\item for any odd integer $y$ the expression given by the correction
terms is divisible by $4$,
\item if $y \equiv 3 \, \op{mod} \, 4$, the expression given by the correction
terms is divisible by $8$,
\item if $y \equiv 1 \, \op{mod} \, 4$, the expression given by the correction
terms is divisible by $8$ if and only if
$\sigma(E) \equiv \sigma(F)\sigma(B) \op{mod} \, 8$.
\end{enumerate}
It remains to be seen if one could get the above results directly from the above Atiyah-Meyer type formula.
\end{rem}

\begin{rem} Even if $X$ is singular, we can define $\chi_y(X)$ (using the same symbol) using the mixed Hodge structure (e.g. see \cite{BSY1, CMS1, CLMS2, MS}).
We define 
$\chi_y(X):= \sum_{i,  p\geq 0} (-1)^i \op{dim}_{\bC} Gr^p_{\mathcal F}(H^i_c(X,  \bC)) (-y)^p,$ where $\mathcal{F}$ is the Hodge filtration of the mixed Hodge structure of $X$.
In this case we can consider the above congruences even for fiber bundles $F \hookrightarrow E \to B$ with $F, E, B$ being possibly singular. In this case we have the following results, $\op{mod} \, 4$ and $\op{mod} \, 8$ being replaced by respectively $\op{mod} \, 2$ and $\op{mod} \, 4$:
\begin{enumerate}
\item For any odd integer $y$, $\chi_y(E) \equiv \chi_y(F)\chi_y(B) \, \op{mod} \,2.$
\item If $y \equiv 3 \, \op{mod} \, 4$, then $\chi_y(E) \equiv \chi_y(F)\chi_y(B) \, \op{mod} \, 4.$
\item If $y \equiv 1 \, \op{mod} \, 4$, then $\chi_y(E) \equiv \chi_y(F)\chi_y(B) \, \op{mod} \, 4 \Longleftrightarrow \, \sigma^H(E) \equiv \sigma^H(F)\sigma^H(B) \, \op{mod} \, 4.$
Here $\sigma^H(X) := \chi_1(X)$ is called the Hodge-signature of $X$, which is equal to the usual signature when $X$ is nonsingular and compact.
\end{enumerate}
However, if the duality formula (\ref{duality}) in \S 2 still holds even in the singular case\footnote{A projective simplicial toric variety as discussed in \cite{MS2} (cf. \cite{MSS}, \cite{Sch}) is such a variety (pointed out by L. Maxim and J. Sch\"urmann).}
, then the same results as in the smooth case hold, i.e., in the above formulas $\op{mod} \, 2$ and $\op{mod} \, 4$ are changed back to $\op{mod} \, 4$ and $\op{mod} \, 8$ respectively. We will deal with the singular case and the intersection homology $\chi_y$-genus $I\chi_y(X)$ in a different paper.

\end{rem}

\begin{rem} It seems that all these multiplicativity mod $4$ and mod $8$ properties for genera
of compact complex algebraic manifolds are obtained by taking degrees of appropriate
characteristic class formulae for the pushforward under the bundle projection
map of (homology) characteristic classes of the total space. This is indeed the case
for the usual multuplicativity and for the correction terms (e.g., see works of Banagl, Cappell, Libgober, Maxim, Sch\"urmann, Shaneson \cite{BCS, CS, CMS1, CLMS2, MS} etc.). Thus it is also reasonable to generalize 
such multiplicativity formulae of characteristic homology classes to the singular setting (e.g., by making use of intersection homology). 
We would like to deal with this problem as well in a different paper.
\end{rem}

\begin{rem} In this paper the congruence formula (\ref{prop}) of two integral polynomials is a key. For $a(y), b(y) \in \mathbb Z[y]$, the congruence $a(y) \equiv 0 \, \op{mod} \, b(y)$ of course means that $\exists c(y) \in \mathbb Z[y]$ such that $a(y) = b(y)c(y)$. Then for any integer $n \in \mathbb Z$ we have
$a(n) =b(n)c(n)$, i.e. $a(n) \equiv 0 \, \op{mod} \, c(n)$. (In our case we consider only odd integers $n$, though.) Namely, we have
$$a(y) \equiv 0 \, \op{mod} \, b(y) \, \, (\text{in} \, \, \mathbb Z[y]) \, \Longrightarrow \forall n \in \mathbb Z, \,\, a(n) \equiv 0 \, \op{mod} \, b(n) \, \, (\text{in} \, \, \mathbb Z).$$
It should be noted that the converse of this implication does not necessarily hold. A counterexample is given by Fermat's little theorem. Indeed, let $p$ be a prime number and $a(y) = y^p-y, b(y)=p$. Fermat's Little Theorem says that for any integer $n \in \mathbb Z$ $n^p \equiv n \, \op{mod} \, p$, i.e. for $\forall n \in \mathbb Z$ $a(n)=n^p-n \equiv 0 \, \op{mod}\, p$. However clearly $a(y)=y^p-y \not \equiv 0 \, \op{mod} \, p  \, \, (\text{in} \, \, \mathbb Z[y])$. The case when $p=5$ is pointed out in \cite{CV}, where L.F. C\'aceres and J. A. V\'elez-Marulanda consider some special cases when this converse does hold.

\end{rem}
\bigskip

\noindent
{\bf Acknowledgements:} \,
We would like to thank Sylvain Cappell, Laurentiu Maxim, Andrew Ranicki and J\"org Sch\"urmann for useful comments. We also would like to thank the referee for his/her thorough reading of the paper and useful comments and suggestions. 

We both had opportunities to give talks at ``Workshop on Stratified Spaces:Perspectives from Analysis, Geometry and Topology, August 22 - 26", in ``Focus Program on Topology, Stratified Spaces and Particle Physics, August 8 - 26, 2016, The Fields Institute for Research in Mathematical Sciences". We would like to thank the organizers (M. Banagl, E. Bierstone, S. Cappell, L. Maxim and T. Weigand) of the workshop and the staff of The Fields Institute for the wonderful organization and for a wonderful atmosphere to work.


\end{document}